\newcommand{\card}[1]{\left|#1\right|}
\newcommand{\beql}[1]{\begin{equation}\label{#1}}
\newcommand{\eeq}{\end{equation}}
\newcommand{\Floor}[1]{{\left\lfloor{#1}\right\rfloor}}
\newcommand{\N}{{\mathbb N}}
\newtheorem{lemma}{Lemma}
\newtheorem{theorem}{Theorem}
\newcommand{\abs}[1]{\left\lvert #1 \right\rvert}
\newcommand{\old}[1]{}
\newcommand{\tref}[1]{Theorem~\ref{t.#1}}
\newcommand{\lref}[1]{Lemma~\ref{l.#1}}
\newcommand{\hide}[1]{}
\newcommand{\beq}[1]{\begin{equation}\label{#1}}
\newcounter{rot}
\def\a{\alpha} \def\b{\beta}  
\def\e{\epsilon}    
\def\G{\Gamma}  
\def\z{\zeta}     
   \def\p{\pi}
\newtheorem{claim}{Claim}
\newcommand{\rdup}[1]{{\left\lceil #1 \right\rceil }}
\newcommand{\brac}[1]{\left(#1\right)}
\newcommand{\set}[1]{\left\{#1\right\}}
\def\E{\mbox{{\bf E}}}
\def\Pr{\mbox{{\bf Pr}}}
\newcommand{\ignore}[1]{}
\begin{document}
\title{Walker-Breaker games}
\author{Lisa Espig 
\thanks{Department of Mathematical Sciences,
Carnegie Mellon University, Pittsburgh PA15213, USA.
Research supported in part by NSF grant ccf1013110,
\hbox{Email}~{\small\texttt{lespig@andrew.cmu.edu.}}}
\and
Alan Frieze
\thanks{Department of Mathematical Sciences, 
Carnegie Mellon University, Pittsburgh PA15213, USA.
Research supported in part by NSF grant ccf1013110,
Department of Mathematical Sciences,
\hbox{Email}~{\small\texttt{alan@random.math.cmu.edu.}}}
\and
Michael Krivelevich\thanks{School of Mathematical Sciences,
Raymond and Beverly Sackler Faculty of Exact Sciences, Tel Aviv
University, Tel Aviv, 69978, Israel.
Research supported in part by a USA-Israel BSF grant and by a grant 
from the Israel Science Foundation.
\hbox{Email}~{\small\texttt{krivelev@post.tau.ac.il.}}}
\and Wesley Pegden\thanks{Department of Mathematical Sciences, 
Carnegie Mellon University, Pittsburgh PA15213, USA.
Research supported in part by NSF grant,
\hbox{Email}~{\small\texttt{wes@math.cmu.edu.}}}}

\maketitle
\begin{abstract}
We introduce and analyze the \emph{Walker-Breaker} game, a variant of Maker-Breaker games where Maker is constrained to choose edges of a walk or path in a given graph $G$, with the goal of visiting as many vertices of the underlying graph as possible.
\end{abstract}

\section{Introduction} 
Maker-Breaker games were intoduced by Erd\H{o}s and Selfridge
\cite{ES73} as a generalisation of Tic-Tac-Toe. Since then there have
been many results on variations on this theme. In a standard version, played on the
complete graph $K_n$, Maker and Breaker take turns acquiring edges, with 
Maker trying to build a particular
structure (e.g., a clique) in his own edges, and with Breaker trying to prevent this. See the recent
book by Beck \cite{Beck} for a comprehensive analysis of Maker-Breaker games.

We consider the following variant on the standard Maker-Breaker
game. In this variant, the \emph{Walker-Breaker game}, the ``Walker'' acquires the edges of a walk consecutively; i.e., at any given moment of the game we have her positioned at some vertex $v$ of a graph $G$ and on her
turn, she moves along an edge $e$ of $G$ that is (i) incident with $v$ and
(ii) has not been acquired by Breaker. If she has not already acquired
$e$, then she is now considered to have acquired it. On Breaker's
move, he can acquire any edge not already owned by Walker. In some
cases we will allow him to acquire $\b$ edges in one move; in this case the \emph{bias} of the game is $1:\b$.

In this paper, we consider Walker-Breaker games where Walker's goal is to visit as many vertices as she can. Breaker's goal is
to reduce the number of vertices that she visits.  The game ends when there is no path from Walker's current position to an unvisited vertex along edges not acquired by Breaker.

We also consider a variant of this game, the \emph{PathWalker-Breaker
  game}, in which Walker cannot revisit any previously visited
vertices; this game ends when there is no path from Walker's current
position to an unvisited vertex along edges not acquired by Breaker,
and vertices not previously visited by Walker.  (Obviously, Walker can
visit at least as many vertices in the Walker-Breaker game on a graph
as in the PathWalker-Breaker game on the same graph).  In this
situation, we sometimes refer to Walker as PathWalker to avoid ambiguity.

In a fictional scenario, Walker represents a missionary who is
traversing a network, trying to convert as many people ($\equiv$
vertices) to his beliefs. Breaker represents the devil, whose only way
to block Walker is to burn untraversed edges of the network.


\smallskip 

Our first Theorem can be seen as a strengthening of the result of Hefetz, Krivelevich,  Stojakovi\'c and Szab\'o \cite{HKSS}, that in a Maker-Breaker game on $K_n$, Maker can construct a Hamilton path in $n-1$ moves. 

\begin{theorem}
\label{t.rw11}
Under optimum play in the $1:1$ PathWalker-Breaker game on $K_n$ $(n>5)$, PathWalker visits all but two vertices.
\end{theorem}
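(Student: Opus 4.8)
I would prove the two halves of the statement separately: that PathWalker has a strategy guaranteeing she visits at least $n-2$ vertices, and that Breaker has a strategy guaranteeing she visits at most $n-2$. The PathWalker direction is the substantial one. I would organise her strategy around maintaining her path so that (i) its current endpoint always has an unburned edge into the set $U$ of still‑unvisited vertices, and — more importantly — (ii) the unburned subgraph induced on $U$ stays "nearly complete", with every vertex of $U$ missing only a bounded number of unburned edges inside $U$. First I would fix conventions (move order, and whether the start vertex is free) and record the bookkeeping: after PathWalker's $k$-th move she occupies $v_k$, has visited $v_0,\dots,v_k$, and Breaker has destroyed at most $k$ edges; the game is lost for her exactly when $v_k$ has no unburned edge to $U$.

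\textbf{The core strategy.} At each turn PathWalker does one of two things. A \emph{repair} move: if Breaker's last move destroyed an edge inside $U$, creating a vertex $z\in U$ of relatively small unburned degree inside $U$, she walks to $z$ — she always has an unburned edge to it, since Breaker can kill only one edge per turn — thereby removing the damaged vertex from $U$ before it can hurt her. Otherwise a \emph{greedy} move: she walks to the vertex of $U$ of largest unburned degree inside $U$. Since Breaker removes at most one edge and PathWalker removes at most one vertex per round, a short double‑counting/potential argument should show that as long as $|U|$ is not too small, every vertex of $U$ retains unburned degree inside $U$ that is a constant fraction of $|U|$; in particular $v_k$ can always move. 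The crucial feature is that $U$ is determined \emph{adaptively} by PathWalker, so Breaker cannot profitably concentrate his deletions on a small set he is unable to predict.

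\textbf{Endgame and the matching bound.} Once $|U|$ has been driven down to a small constant (or to $O(\sqrt n)$, whatever the density estimate supplies), I would finish by direct case analysis: maintain that $\{v_k\}\cup U$ contains a "clean" subset (all internal edges unburned) of size $|U|-1$ through which PathWalker can walk, leaving at most two vertices, and check the last few configurations with $|U|\in\{2,3,4\}$ explicitly — this is where the precise constant $2$ and the hypothesis $n>5$ are pinned down. For the upper bound, Breaker's strategy is to spend his moves steadily lowering the unburned degree (into the unvisited set) of one target vertex $z$ he never lets PathWalker reach, arranging that when $|U|$ shrinks to $2$ one of the two survivors already has its edge to PathWalker's current position destroyed, so that his last move disconnects her from both; verifying PathWalker cannot dodge this — she can choose where to go, but cannot un‑burn edges — is the content.

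\textbf{Main obstacle.} The hard part is controlling the interaction between PathWalker's adaptivity and Breaker's freedom to concentrate deletions: the crude bound "at most $k$ edges burned" is useless in the endgame, where $k\approx n$ yet $|U|$ is tiny, since Breaker could in principle have burned every edge inside a fixed small set. The proof must therefore argue that the repair moves keep the unvisited graph clean enough that no such trap can be sprung, and must carry out the endgame bookkeeping precisely enough to land on exactly two leftover vertices rather than merely $O(1)$ of them.
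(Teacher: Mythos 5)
Your overall architecture (a Walker strategy maintained by an invariant on the unvisited set $U$, plus a separate Breaker strategy for the matching upper bound) is the right one, but both halves have genuine gaps. For the lower bound, the key step you wave at --- ``she walks to $z$ --- she always has an unburned edge to it, since Breaker can kill only one edge per turn'' --- is a non sequitur: Breaker kills one edge per turn but roughly $n$ edges over the game, and could have burned the edge from Walker's \emph{current} position $v_k$ to $z$ many turns ago, while both were still deep inside $U$ and your conditional repair rule (``only repair when the degree gets relatively small'') did not fire. The paper avoids this by having Walker chase \emph{every} Breaker edge lying inside $U$ immediately, which maintains the much stronger invariant that every Breaker edge contains a visited vertex --- equivalently, $U$ always induces a \emph{complete} unburned graph --- and then proves feasibility of the chase by contradiction: if Breaker could play $(b_1,b_2)\subseteq U$ with both $(v_t,b_1)$ and $(v_t,b_2)$ already burned, one of those edges would have violated the invariant at the earlier time it was played. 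Your ``constant fraction of $|U|$'' density invariant is too weak to pin down the exact constant $2$: entering the endgame with $|U|=C$ and an arbitrary burned graph of bounded degree inside $U$ is not a finite case analysis over $|U|\in\{2,3,4\}$, and your ``clean subset of size $|U|-1$'' is asserted rather than maintained. With the complete-graph invariant the endgame is immediate: the invariant also forces Breaker to have at most two edges from $v_t$ into $U_t$, so Walker is stuck only when $|U_t|\le 2$.

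For the upper bound, targeting a single vertex $z$ and repeatedly burning $(z,v_t)$ blocks only \emph{one} vertex: when $U=\{z,w\}$ and Walker sits at $u$ with $(u,z)$ burned, the edge $(u,w)$ is still free and she escapes to $w$, visiting all but one vertex. To strand her two vertices short, Breaker must arrange that after Walker's move to her final position $u$, one of the two edges from $u$ into the surviving $U$ is \emph{already} burned, so that his single remaining move kills the other. You state this goal correctly but your mechanism does not achieve it; the paper's device is to wait until $|U|=4$ and spend two moves claiming a perfect matching on those four vertices, which guarantees that whichever of them Walker visits next is already matched (by a Breaker edge) to a still-unvisited vertex. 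That matching idea is the missing ingredient, and it is also where the hypothesis $n>5$ actually enters.
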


\noindent The bias has a substantial effect on the PathWalker-Breaker game:
\begin{theorem}\label{t.rw1b}
Under optimum play in the $1:\beta$ PathWalker-Breaker game on $K_n$ for $1<\beta=O(1)$, PathWalker visits all but $s$ vertices for $c_1\log n\leq s \leq c_2\log n$, for constants $c_1,c_2$ depending on $\beta$.
\end{theorem}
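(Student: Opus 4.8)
The plan is to prove two matching bounds: an upper bound showing Breaker can isolate $\Omega(\log n)$ vertices, and a lower bound showing PathWalker can always reach all but $O(\log n)$ vertices. For the \emph{upper bound} (Breaker's strategy), the idea is that Breaker plays purely locally around PathWalker's current vertex $v$: whenever PathWalker sits at $v$, Breaker claims $\beta$ of the still-free edges at $v$, preferring those leading to as-yet-unvisited vertices. After PathWalker has made $t$ moves, Breaker has claimed roughly $\beta t$ edges, and a counting/potential argument along the trajectory shows that a constant fraction $\frac{\beta-1}{\beta+1}$ (or similar) of the ``escape routes'' get burned. More precisely, I would track the set $U$ of unvisited vertices and argue that each PathWalker move reduces $|U|$ by exactly one, while Breaker's $\beta$ edges destroy on average a comparable number of potential future visits; running the bookkeeping until $U$ shrinks to size $\Theta(\log n)$, one shows Breaker can maintain that PathWalker's reachable component inside $U$ has already been cut down. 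The cleanest version: Breaker ensures that when only $s = c_1 \log n$ vertices remain unvisited, every edge from PathWalker's current position into $U$ is claimed, using a reservoir argument — over the whole game Breaker claimed $\beta(n-s)$ edges, and he can afford to keep the $\le \binom{s}{2}+s$ ``dangerous'' edges (those incident to $U$) permanently blocked once $\binom{s}{2} \ll \beta s$, i.e. $s \ll \beta$, which fails — so instead the real mechanism must be that Breaker isolates a single target vertex in $O(\log_\beta \text{(something)})$ of PathWalker's visits near it. I expect the honest upper bound to come from: Breaker repeatedly picks a vertex $w$ of current degree $d$ into the live graph near PathWalker, and over PathWalker's next moves burns its remaining free edges; since PathWalker can only ``defend'' one edge per move by walking it, and Breaker burns $\beta\ge 2$, each such vertex costs PathWalker a net of about $\frac{1}{\beta}\log n$ moves to save, so she cannot save more than $O(\log n / \log \beta)$... this is the step I'd need to get right, and it is the \textbf{main obstacle}.

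For the \emph{lower bound} (PathWalker's strategy), the natural approach is a \emph{connectivity-maintenance / expansion} argument: PathWalker maintains that the subgraph $H$ of edges not yet claimed by Breaker, restricted to the unvisited set $U$ together with the ``reachable frontier'', remains a good expander, hence connected and of small diameter, as long as $|U| \ge c\log n$. Since Breaker has claimed only $\beta t = O(n)$ edges total and $|U|$ remains linear for most of the game, the remaining graph on $U$ is still denser than $(1-o(1))K_{|U|}$ minus $O(n)$ edges, so it is an $(n,d,\lambda)$-type pseudorandom / expander graph; PathWalker greedily walks toward the nearest unvisited vertex, which is always within distance $O(1)$ by expansion, and each such excursion visits one fresh vertex at amortized constant cost. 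The process only breaks down when $|U|$ drops to $\Theta(\log n)$: there, a residual set of $\le\binom{s}{2}$ potential edges can all be destroyed by the $\Theta(\log n)$ remaining Breaker moves, which is exactly when $\binom{s}{2} = \Theta(\beta s)$, i.e. $s = \Theta(\beta)$... so again the threshold $\Theta(\log n)$ rather than $\Theta(\beta)$ must come from a subtler argument — presumably that PathWalker, having committed to a path, has \emph{no free choice} of which edge to walk except the unique one she takes, so Breaker blocks the $O(1)$ best continuations each turn and after $\log n$-type many steps of attrition the live component PathWalker sits in within $U$ has constant size. I would formalize the lower bound via: define a potential $\Phi = |U| + (\text{min degree of } H[U])$, show PathWalker can force $\Phi$ to stay large until $|U|$ is small, invoking standard expander-connectivity lemmas (e.g. that a graph on $m$ vertices with minimum degree $\ge m/2$ has a Hamilton path, applied to $H[U]$ while $|U|\gg \beta \log n$).

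Putting it together, Theorem~\ref{t.rw1b} follows by combining the Breaker lower bound $s \ge c_1 \log n$ with the PathWalker upper bound $s \le c_2 \log n$, where $c_1, c_2$ depend on $\beta$ through the attrition rate (each ``kill'' by Breaker being $\beta$ times as fast as each ``save'' by PathWalker, giving the $\log$-scale with base related to $\beta$). The \textbf{hardest part} is pinning down the exact mechanism producing the $\Theta(\log n)$ scale — I expect it is a geometric-decay argument on the size of the live component inside the shrinking unvisited set $U$, where PathWalker loses a constant fraction of her reachable-unvisited vertices per batch of $\Theta(1)$ moves because she cannot simultaneously defend and advance; I would need to set up this component-shrinking recursion carefully and check both that it forces $|U| = \Omega(\log n)$ for Breaker and that PathWalker can prevent it from overshooting past $O(\log n)$, using the pseudorandomness of $K_n$ minus $O(n)$ edges to guarantee expansion throughout.
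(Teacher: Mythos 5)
Both halves of your argument stop exactly where the work begins, and in both cases the mechanism you would need differs from the ones you sketch. For the upper bound, the missing observation is that Breaker isolates unvisited vertices \emph{one at a time}, and that isolating the $k$-th target $w_k$ costs only about $\frac{n-1}{\beta}\left(1-\frac{1}{\beta}\right)^{k-1}$ moves rather than $n/\beta$: since PathWalker cannot revisit vertices, Breaker never needs to claim an edge from $w_k$ to an already-visited vertex, and by the time he turns to $w_k$ a $\left(1-(1-1/\beta)^{k-1}\right)$-fraction of the vertices have been visited. Summing this geometric series shows he can protect $k=\log n/\log(\beta/(\beta-1))$ vertices before running out of board. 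Your sketch circles this point (``each such vertex costs \dots $\frac{1}{\beta}\log n$ moves to save'') but never lands on the geometric decay of the per-vertex cost, which is the entire content of the bound; you yourself flag this as ``the main obstacle.''

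The lower bound is the more serious gap. Expansion or connectivity of the unclaimed graph on $U$ is not the issue --- $K_n$ minus $O(n)$ edges is trivially connected with constant diameter while $|U|=\Omega(\log n)$ --- the threat is that Breaker concentrates his $\beta$ edges per turn on a few \emph{specific} unvisited vertices and severs them from everything PathWalker will ever reach. A strategy that merely walks greedily to the nearest unvisited vertex does nothing to rescue such endangered vertices, and your proposed potential $\Phi=|U|+\min\deg H[U]$ cannot be kept large: Breaker can always drive the minimum degree of a chosen vertex down by $\beta$ per turn while Walker saves at most one vertex per move. The paper's Walker instead plays in rounds of two moves: she first steps to an intermediate vertex $z$ that has unclaimed edges to \emph{all} of the $\beta+1$ unvisited vertices of currently largest Breaker-degree, so that after Breaker's intervening $\beta$ edges she is still guaranteed to reach one of them; i.e., she explicitly prioritizes the most endangered vertices. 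The count of $O(\log n)$ vertices she nevertheless fails to save is then obtained by reducing the evolution of the Breaker-degree sequence on $U$ to a Chv\'atal--Erd\H{o}s-type box game and proving an $A_\beta\log t$ potential-function bound on its tail terms. Nothing in your proposal plays the role of this prioritization or of the box-game bookkeeping, and you acknowledge that the source of the $\Theta(\log n)$ scale is exactly what you have not pinned down; as written, neither direction of the theorem is established.
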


\noindent In the Walker-Breaker game, the effect of bias is not so drastic:
\begin{theorem}\label{t.prw1b}
Under optimum play in the $1:\beta$ Walker-Breaker game on $K_n$ $(n>\b^2)$, Walker visits $n-2\beta+1$ vertices.  Here $1\leq \beta=O(1)$.
\end{theorem}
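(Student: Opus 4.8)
This is an exact equality, so there are two halves: (a) Walker has a strategy that reaches at least $n-2\beta+1$ vertices, and (b) Breaker has a strategy that keeps her to at most that many. I would start from the structural observation that governs the whole game. Let $W$ be the set of vertices Walker has visited, $U=V(K_n)\setminus W$, and let $G'$ be the subgraph of unclaimed-or-Walker edges. Walker's walk lies entirely in $G'$ and spans $W$, so $W$ is connected in $G'$ and from her current position she can reach any vertex of $W$. Hence a path from her position to an unvisited vertex exists exactly when some edge between $W$ and $U$ is still in $G'$, so the game ends precisely when Breaker owns all $|W|\cdot|U|$ of the $W$--$U$ edges. Two consequences: Walker can always make a legal move until the game ends (she can retreat along her own last edge), and she visits a new vertex on every turn unless she must first walk back through $W$ to a vertex with an open edge into $U$. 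Everything reduces to counting how many such detour moves Breaker can force against his budget of $\beta$ edges per round.

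For the lower bound I would let Walker play greedily: from her current vertex $v$, if there is an open edge to some $u\in U$ she steps to such a $u$, preferring one that currently has few Breaker edges to the rest of $U$ (and also occasionally spending a move to \emph{defuse}, i.e.\ visit, a vertex of $U$ that has accumulated many Breaker edges); if $v$ has no open edge to $U$, she walks along a shortest $G'[W]$-path to a vertex $w\in W$ with an open edge to $U$ and steps out there. The two things to prove are that while $|U|\ge 2\beta$ such a $w$ exists, and that these detours are short and rare enough that Breaker never collects all $(n-t)t$ edges between $W$ and $U$ before $t=|U|$ reaches $2\beta-1$. Existence of $w$ is a direct count: the number of Breaker edges is at most $\beta$ times the number of rounds, which stays below $(n-t)t$ for $t=2\beta$ as long as Walker's total move count stays below about $2(n-2\beta)$. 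Shortness of detours uses density: Breaker's total budget is linear in $n$, so once $|W|$ exceeds roughly $\beta^2$ the graph $G'[W]$ has bounded diameter and each detour costs $O(1)$ moves; the subtle point --- and the main obstacle on this side --- is bounding the \emph{number} of detours, for which the defusing moves are needed so that $U$ cannot consist entirely of vertices that are simultaneously hard to enter.

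For the upper bound, Breaker aims to isolate a set $S$ of $2\beta-1$ vertices, that is, to own every $S$-to-$(V\setminus S)$ edge at a time when $S\subseteq U$; this requires about $2\beta n$ edges, which exceeds $\beta$ times the minimum possible number of Walker moves, so Breaker's strategy must \emph{force} Walker to waste $\Theta(n)$ moves and must use the extra rounds this buys him to complete the isolation. He cannot fix $S$ in advance (Walker could step straight into it), so $S$ is committed to only once $|U|$ has fallen to a suitable threshold; before then Breaker splits his $\beta$ edges per round between keeping Walker out of the current candidate set and piling up Breaker-degree at its members, and after committing he seals $S$ off while Walker is repeatedly forced to re-route. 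The delicate parts I expect to be: choosing the threshold correctly, showing Walker cannot break into $S$ faster than Breaker can finish isolating it, and verifying the arithmetic that makes the answer exactly $2\beta-1$ rather than $\beta$ or $2\beta$; the hypothesis $n>\beta^2$ is what keeps the vertex sets involved large enough for these averaging arguments to go through. Combining (a) and (b) gives the stated value $n-2\beta+1$.
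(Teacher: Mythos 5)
Your structural setup for the lower bound is correct --- the game ends exactly when Breaker owns every edge between the visited set $W$ and $U$, so if it ends with $|U|=k$ after $T$ Walker moves then $k(n-k)\le\beta T$ --- but the conclusion $k<2\beta$ then hinges entirely on bounding $T$ by roughly $2(n-k)$, and this is precisely the point you leave open (you yourself call bounding the number and length of detours ``the main obstacle on this side''). The greedy-with-defusing strategy you sketch gives no control on the total time spent re-routing inside $W$. The paper closes this gap with a much simpler device: Walker performs a depth-first search, stepping to a fresh vertex whenever an unclaimed edge from her current vertex to $U$ exists, and otherwise retreating to the parent of her current vertex in the DFS tree. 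Every move traverses a tree edge and each tree edge is traversed at most twice, so $T=2(n-k-1)$ with no further analysis, and $k(n-k)\le 2\beta(n-k-1)$ immediately forces $k<2\beta$.

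Your upper bound rests on a false premise. You argue that Breaker must acquire all $\approx(2\beta-1)n$ edges leaving a $(2\beta-1)$-set, conclude his budget is too small, and infer that he must force Walker to waste $\Theta(n)$ moves; no such forcing is needed, and you never exhibit a strategy achieving it. The paper's Breaker fully isolates only $\beta-1$ vertices $w_1,\dots,w_{\beta-1}$, one at a time: while protecting $w_i$ he always claims the edge from $w_i$ to Walker's current position first (so $w_i$ is never reachable) plus $\beta-1$ other edges at $w_i$, finishing in about $(n-1)/\beta$ rounds per vertex. This leaves at least $\lfloor(n-1)/\beta\rfloor+1$ unvisited vertices (here is where $n>\beta^2$ enters), from which Breaker picks $\beta$ further vertices $y_1,\dots,y_\beta$ and thereafter spends each round claiming exactly the $\beta$ edges from Walker's current position to $y_1,\dots,y_\beta$. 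These last $\beta$ vertices are never fully isolated --- Breaker merely stays one step ahead of Walker forever --- which is why his budget suffices and why the answer is $2\beta-1$ rather than $\beta$ or $2\beta$. Both halves of your argument need to be replaced by concrete strategies of this kind.
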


\noindent For the sake of a graph which is not complete, consider the cube $Q_n$, which is the graph on the vertex set $\{0,1\}^n$, where two strings are adjacent iff they have Hamming distance 1.  
\begin{theorem}\label{t.prwQ}
In optimum play in the Walker-Breaker game on $Q_n$, Walker visits at least $2^{n-2}$
vertices, and at most $2^{n-1}$ vertices. 
\end{theorem}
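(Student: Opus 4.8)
The two inequalities call for two strategies. For the upper bound I would let Breaker pin Walker inside a single $(n-1)$-dimensional subface. If Walker's first move uses the edge of coordinate direction $d_1$, Breaker fixes once and for all some direction $d\neq d_1$ (possible as $n\ge 2$); write $f_x=\{x,x\oplus e_d\}$. Breaker's rule is: whenever Walker moves onto a vertex $u$ that is either not previously visited or is Walker's start vertex $v_0$, Breaker claims $f_u$ unless he already owns it; otherwise he plays arbitrarily. One checks by induction on Walker's moves that she never traverses a $d$-edge. Indeed her first move is in direction $d_1\neq d$; and for every later move, when it is Walker's turn at a vertex $v$, the edge $f_v$ is already owned by Breaker: if $v=v_0$ this happened on Breaker's reply to Walker's most recent arrival at $v_0$, and if $v\neq v_0$ on his reply to her first visit to $v$; in both cases $f_v$ was available to Breaker at that time, since Walker, not having previously crossed a $d$-edge, does not own it. Hence Walker remains in $\{x:x_d=(v_0)_d\}$, a set of $2^{n-1}$ vertices.

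For the lower bound I would have Walker perform a depth-first search of the graph $Q_n\setminus B$, where $B$ denotes Breaker's current edge set: she keeps a rooted tree $T$ on her visited vertices, and from her current vertex $v$ she moves to an arbitrary unvisited neighbour $u$ with $\{v,u\}\notin B$ (making $u$ a child of $v$) if such $u$ exists, and otherwise moves back to the parent of $v$ in $T$ (an edge she owns, so Breaker cannot have taken it). Two observations. First, Walker's walk only ever uses tree edges, each of the $|S|-1$ tree edges at most twice (here $S$ is her final visited set), so she makes at most $2(|S|-1)$ moves and Breaker ends with $|B|\le 2|S|-2$. Second, when the game ends Walker's position lies in $S$, and $S$ is connected in $Q_n\setminus B$ (it is traced out by her walk and Breaker never owns her edges); since the end-of-game rule says no vertex outside $S$ is reachable from her position in $Q_n\setminus B$, the set $S$ is exactly the connected component of $Q_n\setminus B$ containing her, so every edge of $Q_n$ with exactly one endpoint in $S$ lies in $B$, i.e.\ $|\partial S|\le |B|\le 2|S|-2$.

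Now I would invoke the edge-isoperimetric inequality for the cube (Harper, Lindsey, Bernstein, Hart): among all vertex subsets of a given size, subcubes --- more generally, initial segments of the binary order --- minimise the edge boundary. It follows that $|\partial S|\ge 2|S|$ whenever $|S|\le 2^{n-2}$, strictly so when $|S|<2^{n-2}$ (the edge-isoperimetric profile equals $2|S|$ at $|S|=2^{n-2}$ and its ratio to $|S|$ only increases for smaller sizes). Together with the previous paragraph, $|S|<2^{n-2}$ would force $2|S|<|\partial S|\le 2|S|-2$, which is absurd; hence $|S|\ge 2^{n-2}$.

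The substantive part is the lower bound, and the point to be careful about is the arithmetic at the boundary: one must verify that the isoperimetric bound strictly exceeds $2|S|$ for every $|S|$ below $2^{n-2}$, and that the bookkeeping of Walker's moves against Breaker's edges is genuinely tight, $|B|\le 2|S|-2$ rather than merely $O(|S|)$. One should also confirm that the game cannot end while Walker's DFS still has an unexplored opportunity somewhere in $T$ --- but this is immediate, since any such opportunity keeps an unvisited vertex reachable from every vertex of the connected tree $T$, in particular from Walker's position. The upper bound is routine once the first-move direction $d_1$ is accounted for.
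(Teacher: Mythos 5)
Your proposal is correct and follows essentially the same route as the paper: the lower bound via a depth-first-search tree whose boundary edges must all belong to Breaker, combined with the edge-isoperimetric inequality for the cube (yielding a contradiction below $2^{n-2}$), and the upper bound by having Breaker confine Walker to a fixed $(n-1)$-dimensional subcube. Your treatment is somewhat more careful than the paper's at the start vertex in the upper bound and at the boundary case $|S|=2^{n-2}$ in the isoperimetric step, but the substance is identical.
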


\noindent Finally, we consider a one-player game the
\emph{Random-Walker game}, in which the moves of Walker are made
according to a random walk on the edges not acquired by Breaker.
Breaker acquires one edge per move and he has the goal of minimizing the typical number of vertices visited by RandomWalker.  The game ends when there is no path between the position of RandomWalker and an unvisited vertex along edges not acquired by Breaker.
\begin{theorem}\label{t.rwL}
If $G$ has minimum co-degree at least $\a n$ for some absolute constant $\a>0$
then under optimum play (by Breaker), RandomWalker visits all but
at most $c\log n$ vertices of $G$ w.h.p., for a constant $c$ depending
on $\alpha$.
\end{theorem}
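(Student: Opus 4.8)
The plan is to leverage the expansion forced by the co-degree hypothesis (every two vertices have at least $\alpha n$ common neighbours) to show, on the one hand, that RandomWalker discovers new vertices quickly, so the game is short, and, on the other, that a short game does not give Breaker enough edges to seal RandomWalker off while $\omega(\log n)$ vertices are still unvisited. First record the consequences of the hypothesis. Since a common neighbour of $v$ and anything is a neighbour of $v$, every vertex has degree $\ge \alpha n$; and for \emph{any} two vertices $u,v$ there are at least $\alpha n$ length-$2$ paths $u\to w\to v$, one for each common neighbour $w$. If $B$ is Breaker's current edge set and $d_B(x)$ counts his edges at $x$, the number of these $2$-paths avoiding $B$ is at least $\alpha n-d_B(u)-d_B(v)$. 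Call $x$ \emph{clogged} when $d_B(x)\ge\alpha n/4$; since Breaker takes one edge per move, after $t$ moves there are at most $8t/(\alpha n)$ clogged vertices, and any two un-clogged vertices still have $\ge\alpha n/2$ Breaker-free length-$2$ paths joining them.

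Second, bound the number of moves. Let $U_t$ be the unvisited set after move $t$, and work on the event $\mathcal G$ that throughout the first $T^\star:=Kn\log n$ moves (for suitable $K=K(\alpha)$) the current vertex $v_t$ is un-clogged and $|U_t|\ge C\log n$ forces $|U_t|$ to exceed twice the current number of clogged vertices; I will argue $\mathcal G$ holds w.h.p. Given the history, the probability RandomWalker is at a fixed un-clogged $u\in U_t$ after two steps is $\sum_w \frac1{d'(v_t)\,d'(w)}\ge(\alpha n/2)/n^2$, where $d'$ is Breaker-free degree and the sum ranges over Breaker-free $2$-paths; as the position after two steps is a single vertex, summing over the (at least $|U_t|/2$) un-clogged members of $U_t$ gives probability $\ge\alpha|U_t|/(4n)$ of visiting a new vertex in those two steps. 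A coupon-collector summation $\sum_m \frac{4n}{\alpha m}$ over $m$ from $n$ down to $C\log n$ then shows the expected number of moves until $|U_t|\le C\log n$ is $O_\alpha(n\log n)$; since the per-round success probabilities are bounded below given the past, an Azuma/martingale estimate upgrades this to: w.h.p. $|U_t|\le C\log n$ within $T^\star$ moves, and w.h.p. at most $8T^\star/(\alpha n)=O_\alpha(\log n)$ vertices are ever clogged. Taking $C=C(\alpha)$ large discharges the bookkeeping part of $\mathcal G$; the remaining clause, that $v_t$ stays un-clogged, follows from the third step.

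Third, rule out an early end. The game stops at move $T$ only when the component $C$ of $v_T$ in $G\setminus B_T$ misses $U_T$, i.e.\ $U_T\subseteq V\setminus C$. If $|C|$ exceeds the current number of clogged vertices, pick an un-clogged $v'\in C$: for each $u\in U_T$, among the $\ge\alpha n$ common neighbours of $v'$ and $u$, those lying inside $C$ force the edge to $u\notin C$ into $B_T$, these edges are distinct for distinct $u$ and not incident to $v'$, and there are $\ge\alpha n-d_B(v')>3\alpha n/4$ of them; hence $|B_T|\ge\frac{3\alpha n}{4}|U_T|$ and $|U_T|\le\frac{4T^\star}{3\alpha n}=O_\alpha(\log n)$. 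Otherwise $|C|=O_\alpha(\log n)$, so $|C|\le\alpha n/2$ and every vertex of $C$ — in particular $v_T$ — has $>\alpha n/2$ Breaker edges and is clogged; thus it suffices to show that, on the good event, RandomWalker never reaches a clogged vertex before $|U_t|\le C\log n$. The conditional probability of stepping onto a clogged vertex is $O_\alpha(\log n/n)$ per move (there are $O_\alpha(\log n)$ of them, the current un-clogged vertex sends at most one Breaker-free edge to each, and its Breaker-free degree is $\ge\alpha n/2$), and entering a would-be trap still leaves RandomWalker free to exit on her very next move; combining these over the $O_\alpha(n\log n)$ moves with a potential-function argument tracking $|U_t|$ together with $|C_t|$, the size of RandomWalker's current component, yields that $|C_t|$ can drop below $n/2$ only once $|U_t|$ is already $O_\alpha(\log n)$.

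The hard part is this last item. The clean facts — few clogged vertices, strong $2$-path expansion, the counting bound in the ``$|C|$ large'' case — are essentially immediate; the real work is in showing Breaker cannot corner RandomWalker, which requires setting up the stopping-time bootstrap of the second step and the component potential function of the third step with care, so as to couple the (rare) excursions of RandomWalker onto clogged vertices with the catastrophic event that her entire component is severed from the unvisited set while $|U_t|=\omega(\log n)$.
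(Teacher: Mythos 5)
Your first two steps track the paper's proof closely: you isolate the set of vertices with large Breaker degree (the paper's $L_t$, your ``clogged'' vertices, of size $O_\alpha(\log n)$), use the co-degree hypothesis to get a two-step probability of order $|U_t|/n$ of hitting a new vertex from an un-clogged position, and finish with a coupon-collector summation plus concentration. That part is sound (modulo the small bookkeeping point, which the paper also handles, that Breaker gets one move between your two steps and can delete one of the $2$-paths).

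The genuine gap is in your third step, and it is exactly the point you flag as ``the hard part.'' Your own estimates show that the per-move probability of stepping onto a clogged vertex is $\Theta_\alpha(\log n/n)$ over $\Theta_\alpha(n\log n)$ moves, so RandomWalker lands on clogged vertices about $\log^2 n$ times during the game; the clause of your good event $\mathcal G$ asserting that ``$v_t$ stays un-clogged'' is therefore false, and the claim that she ``can exit on her very next move'' is unjustified --- from a clogged vertex the next step is a uniform Breaker-free neighbour, which may itself be clogged, and Breaker can keep working on the small clogged set while she is trapped inside it. The unspecified ``potential-function argument tracking $|U_t|$ together with $|C_t|$'' is precisely the missing content. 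The paper resolves this by decomposing the trajectory into periods $A_j$ (outside $L_t$) and $B_j$ (inside $L_t$), and showing each excursion $B_j$ is a random walk on the graph induced by $L_t\cup\{v\}$, which has only $O(\log n)$ vertices; a hitting-time bound of Brightwell and Winkler then gives that each excursion returns to the entry vertex $v$ and escapes within $O(\log^6 n)$ steps w.h.p., so the total time spent clogged is $O(\log^8 n)=o(t_0)$ and does not disturb the coupon-collector count. Some such quantitative control of the excursions into the clogged set is indispensable, and your proposal does not supply it.
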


\begin{theorem}\label{t.rwU}
If $G$ has minimum degree at least $\a n$ for some absolute constant $\a>0$
then under optimum play (by Breaker), RandomWalker visits at most $n-c\log n$ vertices w.h.p., for any constant $c<\alpha$.
\end{theorem}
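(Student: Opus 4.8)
The plan is to give Breaker a strategy that \emph{escorts unvisited vertices to isolation}, one at a time, and to show there is just enough time to do this for $c\log n$ of them; throughout, $\log$ denotes the natural logarithm. Breaker keeps a current \emph{target} $v$, initially an arbitrary unvisited vertex, and plays in \emph{phases}. On each of his moves inside a phase, having just observed RandomWalker step to a vertex $p$, he burns the edge $pv$ if it is present in $G$ and not yet burned, and otherwise burns an arbitrary un-burned edge incident to $v$. Once every edge at $v$ has been burned, $v$ is isolated in the un-burned graph --- hence permanently unreachable, and in particular never visited --- and Breaker begins a new phase with a fresh target chosen among the currently unvisited vertices.

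Two observations make this promising. First, RandomWalker never reaches the current target $v$: the only vertex from which she could move to $v$ is her present location $p$, but on the previous round Breaker already burned $pv$ (if it existed). The lone exception is the opening round of a phase, where the edge from RandomWalker's position to the new target may still be present; she exploits it only if her single uniform random step lands on $v$, which has probability $O(1/n)$, so a union bound over the $O(\log n)$ phases shows that \whp\ \emph{every} escorted vertex is isolated without ever being visited, and is thus unvisited at the end. Second, a phase lasts at most $n-1$ rounds, since $v$ has at most $n-1$ edges and Breaker burns one of them on every round of the phase; hence by round $t$ Breaker has isolated at least $t/(n-1)-O(1)$ (never-visited) vertices.

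It remains to show the game lasts long enough. Fix $c<\alpha$ and set $T:=c\,n\log n$; it suffices to prove that \whp\ the game is still running at round $T$, since Breaker will then have isolated $(1-o(1))c\log n$ never-visited vertices. Throughout the first $T$ rounds Breaker has removed only $O(\log n)$ edges at any vertex, so RandomWalker always moves from a vertex of degree $(1-o(1))\alpha n$ in the current un-burned graph; consequently, for any fixed vertex $w$, at each step the walk moves onto $w$ with probability at most $(1+o(1))/(\alpha n)$, and hence over any block of $\epsilon\alpha n$ consecutive steps it misses $w$ with probability at least $1-\epsilon$, \emph{regardless of where the block begins}. Chaining $T/(\epsilon\alpha n)$ such blocks and letting $\epsilon\to0$ gives $\Prob{w\text{ unvisited at round }T}\ge e^{-(1+o(1))T/(\alpha n)}=n^{-(1+o(1))c/\alpha}$, so the expected number of vertices unvisited at round $T$ is at least $n^{\,1-c/\alpha-o(1)}$, which tends to infinity because $c<\alpha$; by concentration of the range of a random walk this bound holds \whp. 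Since Breaker has isolated only $O(\log n)\ll n^{\,1-c/\alpha}$ vertices by round $T$, there is still an unvisited \emph{reachable} vertex, i.e.\ the game has not ended. Combining: \whp\ Breaker completes $(1-o(1))c\log n$ phases, isolating that many never-visited vertices, so RandomWalker visits at most $n-c\log n$ vertices.

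The real work is the coverage-rate estimate of the last step --- that RandomWalker cannot cover the graph too fast. The clean part is the per-step bound $\Prob{X_{s+1}=w\mid\text{history}}\le 1/\delta(H_s)$, which upgrades via the block/chaining argument to the exponential tail $\Prob{\tau_w>T}\ge e^{-(1+o(1))T/(\alpha n)}$ with no mixing-time input at all; the main obstacle is passing from this first-moment bound on the number of unvisited vertices to a high-probability statement, which needs a variance bound, for which one wants that conditioning the walk to avoid one vertex does not decrease its chance of hitting another --- a standard-flavoured but somewhat delicate correlation property of random walks. The only other issue, RandomWalker snatching a fresh target on the opening round of a phase, is genuinely negligible and is dispatched by the union bound above.
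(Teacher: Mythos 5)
Your Breaker strategy is exactly the paper's: fix an unvisited target, on every move burn the edge from RandomWalker's current position to it (or, failing that, another unburned edge at the target), so the target is never reached and is isolated after at most $n-1$ moves; then pick a fresh target. Your coverage-rate estimate is also the paper's calculation in a different packaging: the per-step bound $\leq (1+o(1))/(\alpha n)$ for hitting a fixed vertex, exponentiated over the length of the game, appears there phase by phase as the survival factor $\brac{1-\frac{1}{\a n-k}}^{n-k-1}\approx e^{-1/\a}$ per phase. (A small simplification you are entitled to: since Breaker chooses each new target adaptively, after seeing where RandomWalker has just stepped, the ``opening round'' exception you union-bound over never actually arises.)

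The genuine gap is the one you flag yourself and do not close: upgrading ``the expected number of unvisited vertices at time $T=cn\log n$ is $n^{1-c/\a-o(1)}$'' to a w.h.p.\ statement. A second-moment argument on the $T$-step survival count requires $\Prob{w_1,w_2\text{ both unvisited}}\leq(1+o(1))\Prob{w_1\text{ unvisited}}\,\Prob{w_2\text{ unvisited}}$ for (almost all) pairs; this is not a routine fact --- for random walks, avoidance events need not be asymptotically uncorrelated, and your chaining argument produces only one-sided (lower) bounds on the individual survival probabilities, which on an irregular dense graph can genuinely differ between vertices by polynomial factors. Without this estimate the first moment alone gives nothing, since the game could end early on a non-negligible event. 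The paper sidesteps the issue by keeping the accounting phase by phase: with $\z_k$ the number of unvisited, unprotected vertices when Breaker starts on his $k$-th target, it establishes $\E(\z_{k+1})\sim\z_k e^{-1/\a}$ and concentrates $\z_{k+1}$ around its conditional mean by Azuma--Hoeffding applied within the single phase, then iterates the multiplicative recursion over $k\leq(1-\e)\a\log n$ phases while $\z_k\gg\log n$. If you want to complete your write-up, replacing the global second-moment step by this round-by-round concentration is the more tractable route; otherwise you must actually prove the pairwise correlation inequality, which is the hard part of your plan.
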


\subsection{Some notation}
We let $V_t$ (resp. $U=U_t$) denote the set of vertices that have been
visited (resp. not visited) by Walker after Walker has made $t$ moves.
Walker is at vertex $v_t$ after $t$ moves.
The graph induced by Breaker's edges is denoted
by $\G_B$ and the graph induced by Walker's edges is denoted by
$\G_W$.

\section{Proof of \tref{rw11}}\label{anb}
Here at time $t$ the graph $\G_W$ is a path $P_t$. To show that who
goes first does not matter, we assume that Breaker goes first for any
lower bound on the number of visited vertices, and that Walker goes first for any upper bound on the number of visited vertices.
\subsection{Lower bound}

Walker's strategy is
as follows: 
If $\abs{U_t}>2$ and Breaker chooses $f_t$ and $f_t\cap V_{t-1}=\emptyset$
then Walker moves to $v_t\in f_t$. Otherwise, Walker moves 
to an arbitrary vertex. So long as Walker is able to follow this strategy, we will have after each Walker move that:
\beq{1}
\text{Every Breaker edge contains a member of $V_t$.}
\eeq 
We now check that this strategy is feasible for $\abs{U_t}>2$.  We begin with the first type of Breaker's edge, which is disjoint from $V_t$.  Fix $t$ and let $v_{t-1}=x$ and $v_t=y$.
Suppose that Breaker chooses an edge $(b_1,b_2)$ where $b_1,b_2\notin V_t$ and such that
for $i=1,2$, $(y,b_i)$ is a Breaker edge. Assume that this
is the first time this situation happens. Suppose next that $(y,b_i)$
is the $s_i$-th edge
chosen by Breaker. Assume that $s_1<s_2$.  We now have a contradiction 
to \eqref{1} after the choice of $x$. For after $x$ is
chosen, $(y,b_1)$
is a Breaker edge that does not contain a member of $V_{t-1}$.

We now consider the case where Breaker's edge is incident with $V_t$.  \eqref{1} implies that Breaker's choice is at most the second edge between $v_t$ and $U_t$.  In particular, $\abs{U_t}>2$ implies that Walker can move to an unvisited vertex, and Walker will succeed at visiting all but 2 vertices of the graph.

\subsubsection{Upper bound}
Breaker plays arbitrarily until his move at time $n-4,$ when $\abs{U_{n-4}}=4$.  In his next two moves he chooses the two edges of a matching in $U_{n-4}$.  After these two moves (with one Walker move in between), it is again Walker's turn, and 3 unvisited vertices remain.  Regardless of which vertex of $U_{n-3}$ Walker might move to next, that vertex will already be adjacent along one of Breaker's 2 matching edges to a vertex in $U_{n-2}$; thus, with one additional move, Breaker will ensure that both edges from $v_{n-2}$ to $U_{n-2}$ are occupied by Breaker.
\section{Proof of \tref{rw1b}}
\subsection{Lower bound}\label{AS}
We assume that Breaker goes first and describe Walker's strategy.
We suppose that Walker is at some vertex $x$ and describe the next
sequence of moves B,W,B,W (Breaker,Walker,Breaker,Walker.) We call such a sequence a {\em round}. 
We keep track of two sets $L,R$ that partition the set of unvisited vertices $U$. 
Let $\b_R(v)$ be the number of Breaker edges $(v,z),z\in R$. 
At the outset of the game, $L=\varnothing$.  Before Breaker's first
move in each round, we move vertices satisfying $\b_R(v)\geq \alpha R$
from from $R$ to $L$ one by one, (updating $R$ each time), until no such vertices remain, for $\alpha=\frac{1}{3(\beta+1)}$.

\medskip

We describe Walker's strategy for a round as follows.  
Suppose that Breaker has made his first move of the round and 
let $R=\set{w_1,w_2,\ldots,w_r}$ at the end of this move.
We assume that $\b_R(w_i)\geq \b_R(w_{i+1}),1\leq i<r$. 

For his first move of the round, Walker moves to a vertex $z\in R$ such that none
of $(z,w_j),1\leq j\leq \b+1$, is a Breaker edge.   Breaker's response consists of just $\beta$ edges; Walker's final move is to move from $z$ to one of the vertices $w_j$ $1\leq j\leq \b+1$.

We will prove that it is possible to follow this strategy until most vertices have been visited.  This proof is based on two ingredients:
\begin{claim}
\label{c.Lmoves}
There is a constant $c_\beta$ such that so long as Walker follows this strategy and so long as $\abs R>c_\beta\log n$, at most $\beta$ vertices are moved from $R$ to $L$ in any given round.
\end{claim}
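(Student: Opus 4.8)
The plan is to use the potential $\Phi=e_B(R)$, the number of Breaker edges with both endpoints in $R$, to show it stays $O_\beta(|R|)$, and to deduce from this a bound on how many vertices a single moving phase can push into $L$. For the deduction, fix a round, let $R_0$ be the value of $R$ when that round's moving phase begins, and suppose it moves $u_1,\dots,u_m$ in this order. By the rule of the phase, when $u_i$ leaves $R$ we have $|R|=|R_0|-i+1$ and $\beta_R(u_i)\ge\alpha(|R_0|-i+1)$; since a vertex's Breaker-degree into $R$ only drops as vertices are removed and the moving phase adds no edges, $u_i$ already has at least $\alpha(|R_0|-i+1)$ Breaker edges into $R_0$ itself. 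Summing $\sum_{i=1}^m\beta_R(u_i)$, with $\beta_R(u_i)$ read at the moment $u_i$ leaves, and noting that an edge $u_iu_j$ is charged only at the removal of whichever endpoint leaves first, while an edge between $\{u_1,\dots,u_m\}$ and the rest of $R_0$ is charged exactly once, we get
\[
e_B(R_0)\;\ge\;\alpha\Big(m|R_0|-\tbinom{m}{2}\Big)\;\ge\;\tfrac{\alpha m|R_0|}{2}
\]
(using $m\le|R_0|$, forced since $R$ must stay nonempty). Hence $m\le 2e_B(R_0)/(\alpha|R_0|)$, so everything reduces to showing $e_B(R)\le\tfrac{\alpha\beta}{2}|R|$ at the start of every moving phase.

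To bound $e_B(R)$ I would run a round-by-round argument exploiting Walker's forced step. Right after a moving phase no $v\in R$ has $\beta_R(v)\ge\alpha|R|$, so the $\beta$ largest Breaker-degrees in $R$ account together for fewer than $\beta\alpha|R|<|R|/3$ of the (at most $e_B(R)$) Breaker edges inside $R$; consequently the $(\beta+1)$-st largest Breaker-degree is at least about $2e_B(R)/|R|$. Since Walker is forced each round to move to some $w_j$ with $j\le\beta+1$, i.e.\ to a vertex among the $\beta+1$ top-degree vertices (and her choice of $z$ avoided Breaker edges $zw_\ell$ for $\ell\le\beta+1$, so removing $z$ first does not shield $w_j$), removing $w_j$ from $R$ deletes at least roughly $2e_B(R)/|R|$ Breaker edges, while Breaker's two moves of the round add only $2\beta$ edges and the moving phase can only delete more. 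Feeding these into a recursion for $e_B(R)/|R|$ drives that ratio toward an $O_\beta(1)$ value, and the hypothesis $|R|>c_\beta\log n$ is exactly what ensures the $O(1)$ and $O(\beta)$ additive per-round slippages (from integrality, from the loss of $z$, and from the weakness of the lower bound on $\beta_R(w_j)$) have not accumulated to spoil the estimate before this many vertices remain unvisited; with $c_\beta$ and the constants chosen appropriately, and recalling $\alpha=1/(3(\beta+1))$, the resulting bound on $m$ is at most $\beta$.

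The step I expect to be the genuine obstacle is keeping $e_B(R)$ down to $O_\beta(|R|)$ while ruling out a \emph{cascade} within one moving phase: removing an over-threshold vertex shrinks $R$, lowering the threshold $\alpha|R|$, which could in principle drag further vertices over. What should make a cascade self-limiting — and is the fact to exploit — is that deleting an over-threshold vertex $u$ removes at least $\alpha|R|$ Breaker edges, hence lowers the Breaker-degree of each of at least $\alpha|R|$ other vertices by $1$, far more than the $\alpha<1$ by which the threshold drops; so the near-threshold Breaker-neighbours of $u$ end up strictly \emph{below} the new threshold rather than above it. Turning this observation, together with the integrality of Breaker-degrees and the recursion above, into the clean "at most $\beta$ per round" is where the real bookkeeping concentrates.
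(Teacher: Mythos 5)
Your opening reduction is fine: charging each Breaker edge inside $R_0$ to the first of its endpoints to be lost does give $e_B(R_0)\ge\alpha\bigl(m|R_0|-\binom{m}{2}\bigr)$, hence $m\le 2e_B(R_0)/(\alpha|R_0|)$, and this correctly handles the cascade you worry about at the end. The genuine gap is that the statement you reduce to, $e_B(R)\le\frac{\alpha\beta}{2}|R|$ (which with $\alpha=\frac{1}{3(\beta+1)}$ means fewer than $|R|/6$ Breaker edges inside $R$), is simply false: Breaker can place his $2\beta$ edges per round as a partial matching inside $R$; Walker's two removals then destroy at most two of these edges per round, so after $n/4$ rounds $e_B(R)\gtrsim(2\beta-2)\cdot n/4$ while $|R|\approx n/2$, i.e.\ $e_B(R)/|R|\gtrsim\beta-1\ge 1$. (The Claim is of course still true there -- all degrees are $1\ll\alpha|R|$ -- which is exactly the sign that the edge count is the wrong invariant.) Even the relaxed target $e_B(R)=O_\beta(|R|)$ would only return $m=O(\beta^2)$ through your inequality, not $m\le\beta$; and that relaxed target is itself not established by your recursion. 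The step ``the $(\beta+1)$-st largest Breaker-degree is at least about $2e_B(R)/|R|$'' fails precisely in the regime you are trying to maintain: when $e_B(R)\le\frac{\alpha\beta}{2}|R|$ the top $\beta$ degrees (each allowed to be as large as $\alpha|R|$) can account for \emph{all} of $e_B(R)$, so $\beta_R(w_{\beta+1})$ may be $0$ and Walker's forced step destroys nothing; the recursion then gives no downward pressure, and in fact its equilibrium ratio is $\Theta(\beta)$ at best with a possible $\Theta(\beta\log n)$ drift. Note that the paper's own bounds are consistent with $e_B(R)=\Theta(|R|\log n)$.

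The underlying issue is that a vertex is lost when its degree exceeds a threshold that is \emph{linear} in $|R|$, so what must be controlled is an order statistic, not an average. The paper does this by reducing to a box game and running the Chv\'atal--Erd\H{o}s potential argument on the terms ranked $\beta+1$ and below, showing the $(\beta+1)$-st largest Breaker-degree never exceeds $A_\beta\log t$. Taking $c_\beta=A_\beta/\alpha$, the loss threshold $\alpha|R|>A_\beta\log n$ then exceeds every tail degree, so only the top $\beta$ vertices can ever be lost in a round -- which is the Claim. If you want to salvage your scheme, you would need to replace the bound on $e_B(R)$ by a bound on $\beta_R(w_{\beta+1})$ of order $o(|R|)$, which is essentially the paper's Lemma on tail terms.
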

\begin{claim}
\label{c.smallL}
There is a constant $C_\beta$ such that so long as  Walker follows this strategy, we will have $\abs{L}\leq C_\beta\log n$.
\end{claim}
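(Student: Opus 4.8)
The plan is to bound the total number of vertices ever moved from $R$ to $L$ over the course of the game. This suffices: since Walker always moves into $R$ (to $z\in R$ and then to some $w_j\in R$), a vertex placed in $L$ is never visited, so $L$ never loses a vertex and $|L_t|$ equals the number of moves into $L$ made up to time $t$; it is therefore enough to bound that total by $C_\beta\log n$.

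A first, and by itself insufficient, observation is a budget inequality. When $v$ is moved to $L$ it has $\beta_R(v)\ge\alpha|R|$ Breaker edges joining it to the set $R$ current at that instant; charge these to the move. No Breaker edge $(a,b)$ is charged to two moves, since charging it to $a$'s move requires $b\in R$ then and charging it to $b$'s move requires $a\in R$ at a later (or later-in-the-round) instant, impossible once $a$ has left $R$ for $L$. Hence, if the $i$-th move occurs while $|R|=\rho_i$, then $\alpha\sum_i\rho_i$ is at most the number of Breaker edges ever played, which is $2\beta$ times the number of rounds, i.e.\ at most $\beta n$. On its own this only gives $\sum_i\rho_i\le\beta n/\alpha$, which permits as many as $\Theta(n/\log n)$ moves with $\rho_i=\Theta(\log n)$; the real work will be to improve it using the structure of Walker's strategy.

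The key point is that a move into $L$ must be \emph{prepared}. A vertex $v$ can be moved to $L$ only after $\beta_R(v)$ has climbed from essentially $0$ to $\alpha|R|$, and $\beta_R(v)$ grows by at most $2\beta$ per round, so the final stretch of this ascent (say from $\tfrac12\alpha|R|$ to $\alpha|R|$) occupies $\Omega(\alpha|R|/\beta)$ rounds, during all of which $v$ has very large $\beta_R$ and so lies among the $\beta+1$ vertices of $R$ of largest Breaker-degree. But Walker's strategy removes one of these top $\beta+1$ vertices every round: Breaker's second move has only $\beta$ edges, so he can block at most $\beta$ of Walker's $\beta+1$ candidate edges out of $z$, and Walker takes the remaining one. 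Hence Breaker cannot keep more than $O(\beta)$ vertices in the final stretch of an ascent simultaneously — any surplus is visited, and so removed, by Walker before it can enter $L$. Combined with Claim~\ref{c.Lmoves}, this means the moves into $L$ come in batches of $O(\beta)$, and consecutive batches are separated by $\Omega(\alpha|R|/\beta)$ rounds; in each such gap $\Omega(\alpha|R|/\beta)$ fresh vertices are visited, so $|R|$ shrinks by a factor $1+\Omega(\alpha/\beta)=1+\Omega(\beta^{-2})$ across every batch. Starting from $|R|=n-1$ and stopping once $|R|\le c_\beta\log n$ (beyond which at most $c_\beta\log n$ more vertices can ever enter $L$, as $R$ only shrinks), geometric decay yields $O(\beta^2\log n)$ batches, each of size $O(\beta)$, so $|L|\le C_\beta\log n$. (The same estimate can be packaged as: $\Phi_t:=|L_t|+c_\beta'\log|R_t|$ is non-increasing over each "ascent plus batch" for a suitable $c_\beta'$, whence $|L|\le\Phi_0=c_\beta'\log n$.)

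I expect the hard part to be the middle step: upgrading Walker's purely local rule into the global assertion that Breaker cannot maintain more than $O(\beta)$ near-threshold vertices at once, and hence that every batch of moves into $L$ is preceded by $\Omega_\beta(|R|)$ essentially preparation rounds. One must rule out the strategy in which Breaker slowly, and in parallel, builds many vertices up to just below the threshold and then harvests them; the obstruction is that such a stockpile would occupy the whole top of the $\beta_R$-order and so be thinned by Walker faster than Breaker can replenish it, and making this quantitative will require careful tracking of the sorted $\beta_R$-profile, using integrality of $\beta_R$ together with the choice $\alpha=\tfrac1{3(\beta+1)}$ (so the integer threshold $\lceil\alpha|R|\rceil$ drops by at most $1$ when $|R|$ drops by $2$). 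The budget inequality and the $|R|\le c_\beta\log n$ tail estimate should be routine book-keeping.
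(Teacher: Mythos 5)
Your overall architecture matches the paper's: restrict attention to the regime $\abs R\gtrsim\log n$, argue that a vertex can enter $L$ only after a long ``ascent'' of its Breaker-degree during which it sits among the top $\beta+1$ degrees, note that each such ascent forces $\abs R$ to shrink by a constant factor $1-\Omega_\beta(1)$, and conclude by geometric decay that only $O_\beta(\log n)$ vertices are ever lost. The paper carries out exactly this plan (after abstracting the situation into an auxiliary ``box game'').

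The genuine gap is the step you yourself flag as ``the hard part'': the assertion that a vertex $v$ with $\beta_R(v)\geq\tfrac12\alpha\abs R$ must lie among the $\beta+1$ largest Breaker-degrees, equivalently that Breaker cannot maintain a large stockpile of near-threshold vertices. Nothing you write establishes this. Your budget inequality only bounds the number of such vertices by $O(\beta n/(\alpha\abs R))$, which is useless once $\abs R=O(n/\log n)$, and your sketch (``the stockpile would be thinned faster than replenished'') is precisely the statement that needs proof, not a proof of it. The paper's way of filling this hole is its Lemma~\ref{l.boxtail}: after deleting the top $\beta$ terms, the remaining degree sequence evolves exactly as in the classical Chv\'atal--Erd\H{o}s box game (largest remaining term deleted each turn, total increase $O(\beta)$ per turn), and the standard potential-function argument shows every \emph{tail} term stays below $A_\beta\log t$ for all time. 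That uniform $O(\log n)$ bound on non-top-$\beta$ degrees is what makes ``$\beta_R(v)\geq\tfrac12\alpha\abs R$ when $\abs R>2(A_\beta/\alpha)\log n$'' imply ``$v$ is in the top $\beta$,'' and hence what makes your ascent/batch accounting legitimate. Without some version of this lemma your argument does not close. (A secondary, fixable imprecision: your claim that consecutive batches are \emph{separated} by $\Omega(\alpha\abs R/\beta)$ rounds is not literally true, since Breaker can stagger the ascents of different near-threshold vertices; the clean fix, as in the paper, is to account per ``slot'' of the top $\beta$, each slot producing at most one loss per constant-factor shrinkage of $\abs R$, for a total of $\beta\cdot O_\beta(\log n)$ losses.)
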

Let us first see how these two claims imply that Walker can follow this strategy until she has visited all but $c_2\log n$ vertices for some $c_2$ depending on $\beta$.

We first check that Walker can always move to a suitable intermediate
vertex $z$. 
After Breaker's opening move of the round, at most $(\b+1)\a|R|+\beta=|R|/3+\beta$ vertices of $R$ can be Breaker neighbors of $w_1,w_2,\ldots,w_{\b+1}\in R$. 
Moreover, the fact that $x$ was in $R$ at the beginning of the previous round, together with Claim \ref{c.Lmoves}, means that if $\abs R>c_\beta \log n$, then $x$ 
has at most $\beta+\alpha(\abs{R}+\beta)\leq \beta+\abs R / 3$ neighbors in $R$, leaving at least $ \frac 1 {3} \abs{R}-2\beta$
choices
for $z$. So, Walker will be able to move to such a $z$ as long as $\abs R>\max(6\b,c_\beta \log n)$.  For $\beta=O(1)$, Claim \ref{c.smallL} now implies that Walker can follow this strategy until all but $(C_\beta+c_\beta)\log n$ vertices have been visited.

It remains to prove Claims \ref{c.Lmoves} and \ref{c.smallL}.  We do this via a simpler \emph{box game}.
\subsubsection{Box Game}
We analyze Walker's strategy via a {\em Box Game}, similar to the Box Game of Chv\'atal and
Erd\H{o}s \cite{CE}. This is not really a game, as there is only one
player whom we call BREAKER. Any move by Breaker in the Walker-Breaker
game willbe modelled by a BREAKER move in this Box Game.

Consider a sequence $b_1\geq b_2\geq \cdots \geq b_n$ of non-negative integers.  The Box game is played as follows:
At the beginning of each turn of the game, BREAKER has a \emph{loss} phase, in which he may at his option, delete terms with value at least 
$\a=\frac{1}{3(\b+1)}$ times the remaining number of terms in the
sequence.   (At
any point, the sets of remaining, lost, and deleted terms of the
original sequence form a partition of the terms of the original sequence.)

Following the loss phase,  BREAKER increases each of the first $\b$ terms of the
sequence by an amount up to $4\b$. In addition he also increases terms
$b_i$ for $i>\b$ by a total amount up to $4\b$. After this he deletes one of
the currently largest $\b+1$ terms $b_1,b_2,\dots, b_{\b+1}$ of the
sequence, and up to one other term from anywhere in the sequence.

The relevance of this game stems from the following:
\begin{claim}
\label{c.boxgraph}
If Breaker can play the PathWalker-Breaker game on a graph with $n$ vertices against
Walker which is following the strategy described earlier such that
$\abs{L}$ increases by $\ell_t$ in each round $t$, then in this Box game, beginning with the all
zero's sequence of length $n$, BREAKER can force that $\ell_t$ terms become
lost in turn $t$.
\end{claim}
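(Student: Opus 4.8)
The plan is to prove Claim \ref{c.boxgraph} by an explicit simulation: I would describe a BREAKER strategy for the Box game that mirrors, round by round, the moves Breaker actually makes in the PathWalker-Breaker game against the fixed Walker strategy, and maintains throughout a correspondence between the two states. The correspondence is this: the remaining terms of the Box sequence are in bijection with the current set $R$ of unvisited vertices not yet moved to $L$, each term being kept at least as large as $\b_R(v)$ for the vertex $v$ it represents; lost Box terms correspond to vertices moved from $R$ to $L$; deleted Box terms correspond to vertices Walker has visited and thereby removed from $R$; and the sequence is kept in non-increasing order at all times. At the start there are no Breaker edges, so every $\b_R(v)=0$ and $R$ is essentially all of $V(G)$, matching the all-zeros sequence of length $n$.

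I would then check that a single round B,W,B,W of the game can be emulated by one legal Box turn that loses exactly $\ell_t$ terms and preserves the correspondence. The loss phase is immediate: a vertex $v$ that Walker's rule moves out of $R$ satisfies $\b_R(v)\ge\a\abs R$, and since $\abs R$ equals the number of remaining Box terms and the term of $v$ is at least $\b_R(v)$, BREAKER may legally lose that term; doing this for all $\ell_t$ such vertices realizes the required losses. For Breaker's two sub-moves, an edge with both endpoints in $R$ raises exactly two terms by one, while edges touching $L$ or already-visited vertices change no $\b_R$-value, so the whole round contributes at most $4\b$ to the sequence — well inside the allowance (first $\b$ terms up by at most $4\b$ each, the rest by at most $4\b$ in total) — and BREAKER copies these increments and re-sorts. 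A decrease of some $\b_R(v)$ caused by a neighbour of $v$ leaving $R$ is simply not applied to the Box sequence; this only strengthens domination and costs nothing, since BREAKER's loss phase is optional. Finally Walker's two visits, to the intermediate vertex $z$ and to a vertex $w_j$ among the $\b+1$ largest of $R$, are matched to the Box game's two deletions: $z$ under the ``one other term from anywhere'' clause and $w_j$ under the ``delete one of the currently largest $\b+1$ terms'' clause.

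The point requiring genuine care, and the one I expect to be the main obstacle, is that the Box turn is atomic — loss, then all increases, then both deletions — whereas a round is interleaved: Walker visits $z$ between Breaker's two sub-moves, and the vertex $w_j$ she finally visits is one of the $\b+1$ largest of $R$ only relative to the state right after Breaker's first sub-move, so after the second sub-move it may have slipped below rank $\b+1$. The cleanest route around this, which I would try first, is to observe that deleting the term of $w_j$ right after the first sub-move's increases, and only then applying the second sub-move's increases and deleting the term of $z$, changes neither the set of surviving terms nor the legality of any increase; so it suffices to run the simulation against an interleaved variant of the Box game and then note that any loss-sequence $(\ell_t)$ achievable there is achievable in the stated atomic game, since the loss phases, which produce the $\ell_t$, see the same sequences of values and of $\abs R$ in both variants. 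A fallback is to keep the atomic formulation and use the slack in the constants: since the whole round moves the sequence by only $4\b$, $w_j$ cannot have dropped far, and BREAKER can inflate its term back into the top $\b+1$ before deleting it, which is harmless as that term leaves the sequence on the same turn. In either case the remaining work is routine but fiddly bookkeeping — keeping the sequence sorted after every operation and checking consistency with $\a=\frac1{3(\b+1)}$ and $\b=O(1)$ — and once the emulation is legal in every round the correspondence is preserved throughout, so the Box game loses exactly $\ell_t$ terms on turn $t$, as claimed.
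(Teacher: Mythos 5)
Your proposal is correct and follows essentially the same simulation argument as the paper: the Box sequence tracks (an upper bound on) the Breaker $R$-degrees, the $4\beta$ increase allowance absorbs the $2\beta$ edges per round, the two deletions match Walker's visits to $z$ and to $w_j$, and the optional loss phase matches vertices entering $L$. You are in fact more careful than the paper on one point it elides --- that $w_j$ is among the top $\beta+1$ only as ranked after Breaker's \emph{first} sub-move of the round, so the atomic Box turn needs the reordering (or slack) argument you give --- and your resolution of that is sound.
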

\begin{proof}
At any point, the sequence $(b_i)$ represents Breaker
degrees of vertices not yet visited by Walker and not in $L$, in
decreasing order.  In any round, Breaker places $2\b$ edges in the
graph, which increases the total degrees by $4\b$; thus, BREAKER can
produce the exact same resulting (degree) sequence with an allowed alteration of the terms of the sequence.  In any round, Walker will visit two vertices, the
second of which is a vertex of among the highest possible $\b+1$
degrees; this corresponds to the deletion of the two terms on each
turn of the box game.  If in each loss phase of the box game, BREAKER loses as many terms as possible, then BREAKER will lose exactly as many terms as vertices enter $L$ in the the PathWalker-Breaker game.
\end{proof}
Note that our definition of the Box Game allows much
more freedom to BREAKER than is necessary for Claim \ref{c.boxgraph}. This extra freedom
does however simplify the analysis, by enabling us to
decouple consideration of the first $\b$ boxes from the rest.  In particular, call a term in the box game sequence a \emph{tail term} if it is not among the $\b$ largest. We will prove Claims \ref{c.Lmoves} and \ref{c.smallL} by proving the following lemma regarding the box game:
\begin{lemma}
\label{l.boxtail}
There is a constant $A_\beta$ such that after any number of steps $t$ in the box game, the tail terms are all at most $A_\beta \log t$.
\end{lemma}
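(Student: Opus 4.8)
The plan is to bound the tail terms by a ``potential'' or ``charging'' argument tracking how a tail term can grow over the course of the game, using crucially the loss phase: a term that becomes too large relative to the current number of remaining terms is removed (lost), so a tail term can only survive while it stays below the threshold $\alpha r$, where $r$ is the current number of remaining terms. The key observation is that a tail term $b_i$ with $i>\beta$ only receives increases from the ``total amount up to $4\beta$'' that Breaker may distribute among the tail in each step — so across $t$ steps the \emph{sum} of all increments ever applied to tail terms is at most $4\beta t$. This is a strong global constraint: no matter how Breaker spreads the $4\beta$ per step, the tail as a whole absorbs at most $4\beta$ units of increase per step.

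First I would set up notation: let $r_t$ be the number of remaining terms after step $t$, and note that $r_t \geq n - 2t$ (two terms are deleted per step, plus losses, but losses only happen when a term exceeds $\alpha r$). The heart of the argument is a dichotomy for the phase during which $r_t$ is still large, say $r_t \geq 2t$ or so: during this phase the loss threshold $\alpha r_t$ is at least $\alpha t$, and I claim no tail term can exceed $C \log t$ during this phase. For this I would argue by contradiction / extremality: consider the first time a tail term reaches value $v$; to have accumulated value $v$ it must have been ``fed'' repeatedly, but each unit fed to a tail term is one of the $4\beta t$ total tail-units, and I want to show the combinatorics of which term is largest forces a logarithmic bound. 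The cleanest way: observe that a tail term only becomes a \emph{largest-$(\beta+1)$} term (hence eligible for deletion) if enough smaller-indexed terms have been deleted; track the quantity $S_j = $ sum of the $j$ largest tail terms, show $S_j$ grows by at most $4\beta$ per step and decreases by the $j$-th largest tail value whenever a tail term among the top $j$ is deleted or lost. Comparing the deterministic growth $4\beta t$ against the number of deletions available yields that the maximum tail term is $O(\log t)$: roughly, if the maximum tail term were $m$, then the number of tail terms of value $\geq m/2$ is at most $8\beta t / m$, and each such term had to be built up, and the loss phase caps things once a term exceeds $\alpha r \approx \alpha t$; a standard ``water-filling'' estimate then gives $m = O_\beta(\log t)$.

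The main obstacle I anticipate is handling the regime where $r_t$ has become small — comparable to $t$ or smaller — because then the loss threshold $\alpha r_t$ is itself only $O(\log t)$ (once $r_t = O(\log t)$), and the clean global ``$4\beta t$ total increments'' bound is too weak on its own to control terms of logarithmic size; moreover Breaker gains freedom to pump the surviving tail. I would handle this by a separate accounting once $r_t$ drops below, say, $100\beta \log t$: at that point only $O(\log t)$ tail terms remain, and each step they collectively gain at most $4\beta$, so over the remaining (at most $O(\log t)$) steps before they are all deleted they gain at most $O(\log t)$ more, so combined with the $O(\log t)$ bound inherited from the large-$r$ phase we still get $O(\log t)$. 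Assembling the two regimes, with $A_\beta$ chosen to dominate both the water-filling constant and the small-$r$ contribution, gives the lemma. The subtle point to get right is that the transition time between regimes is itself $\Theta(n)$ and not $\Theta(\log t)$, so one must be careful that ``$\log t$'' at the transition is comparable to ``$\log n$'' — indeed it is, since the interesting transition happens at $t = \Theta(n)$, which is exactly why this lemma will yield the $\log n$ bounds in Claims~\ref{c.Lmoves} and \ref{c.smallL}.
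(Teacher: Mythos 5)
Your write-up identifies one correct ingredient --- the tail receives a total increment of at most $4\beta$ per turn --- but it hangs the actual $O(\log t)$ bound on the wrong mechanism and never completes the step that is supposed to produce the logarithm. You repeatedly invoke the loss phase as what caps a tail term, but the loss phase is at BREAKER's option (he ``may at his option, delete terms\dots''), so it cannot be used to bound what he can achieve; moreover its threshold $\alpha r$ is linear in the number of remaining terms, so even forced losses would give nothing like a logarithmic cap while $r$ is large. The concluding assertion ``a standard water-filling estimate then gives $m=O_\beta(\log t)$'' is not proved: the global bound of $4\beta t$ on total tail increments only shows (by Markov) that few tail terms can be large \emph{simultaneously}, and by itself it is consistent with a tail term of size $\Theta(t)$. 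The ingredient that actually forces the logarithm is the mandatory deletion rule: every turn BREAKER must delete one of the $\beta+1$ largest terms, so the current largest tail term leaves the tail every turn (either it is deleted, or it is promoted out of the tail when a larger term is deleted). The interplay of ``$4\beta$ added to the tail per turn'' with ``the largest tail term leaves every turn'' yields the harmonic-type bound $b_{\beta+1}\le O_\beta(\log t)$, exactly as in the Chv\'atal--Erd\H{o}s box game. You gesture at this with the quantity $S_j$, but you never run the resulting recursion, and your accounting of how $S_j$ changes under a deletion is off (it drops by the deleted value minus the incoming $(j+1)$-st value, and a deletion inside the top $j$ tail terms is only guaranteed for $j=1$).

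The paper's route is to first apply Lemma~\ref{l.dominating} to reduce to the extremal play in which BREAKER never loses a term and always deletes exactly the $(\beta+1)$-st largest term; this reduction also disposes of an issue you do not address, namely that a term promoted out of the tail could later fall back into it. After the reduction, the subsequence $b_{\beta+1},b_{\beta+2},\dots$ is literally the classical box game with budget $4\beta$ per turn, and the standard potential-function argument gives $A_\beta=1/\log\bigl(\tfrac{4\beta}{4\beta-1}\bigr)$. Once the argument is organized this way, your two-regime split on the size of $r_t$ is unnecessary.
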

First let us observe that \lref{boxtail} implies both Claims \ref{c.Lmoves} and \ref{c.smallL}, via Claim \ref{c.boxgraph}.

\begin{proof}[Proof of Claim \ref{c.Lmoves}]  Let
  $c_\beta=A_\beta/\alpha$, and suppose Breaker can play the
  PathWalker-game on a graph with $n$ vertices against Walker which
  is following the strategy described earlier, and achieve that on
  some turn, more than  $\beta$ vertices become lost i.e. move from
  $R$ to $L$.  Claim
  \ref{c.boxgraph} implies that he can play the box game and achieve
  that on some turn, more than $\beta$ terms $b_j$ become lost, which
  would be a contradiction since $\abs R>\frac{A_\beta}{\alpha} \log
  n$ at the beginnning of the turn and $b_j\leq A_\beta \log t\leq A_\beta \log n$ for $j>\beta$ implies that $b_j<\alpha \abs R$ for $j>\beta$; in particular, only the $\beta$ largest terms can become lost on any given turn.
\end{proof}
\begin{proof}[Proof of Claim \ref{c.smallL}]
It suffices to prove that the claim holds so long as $\abs R >
2c_\beta \log n$ (with $c_\beta=A_\beta/\alpha$, as before), since
Claim \ref{c.smallL} will then remain true even if all remaining
vertices in $R$ become lost.  In particular, we may assume that in any
given round, only vertices from among the $\beta$ maximum Breaker-degree vertices of $R$ become lost.

Using Claim \ref{c.boxgraph}, we carry out our analysis in the simplified box game.  At the beginning of a turn, some terms $b_i$ for $i\leq \b$ may become lost.  Suppose this ``box'' became one of the $\b$ largest (for the last time) at turn $t_0$ when $r_0$ terms remained, and becomes lost at turn $t_1=t_0+k$, when a total of $r_1$ terms remain.  \lref{boxtail} implies that the box had at most $A_\b\log t_0$ balls at turn $t_0$.  To become lost at time $t_1$ requires the box to have at least $\alpha r_1$ balls; thus, we have that
\[
\alpha r_1-A_\b \log t_0\leq 4\b k.
\]
Since $r_1\geq r_0-k\b$, we have
\[
k\geq \frac{\alpha r_1-A_\b \log t_0}{4\b}\geq \frac{\alpha r_1}{8\beta}\geq \frac{\alpha(r_0-\b k)}{8\b},
\]
since $r_1\geq 2c_\b \log n\geq 2c_\b \log t_0$.  In particular,
\begin{equation}
\label{e.mink}
k\left(1+\frac{\alpha}{8}\right)\geq \frac{\alpha r_0}{8\beta}\implies k\geq \frac{\alpha r_0}{10\b}.
\end{equation}
In particular, when a term enters the largest $\b$ for the last time, it takes at least a number of steps $k$ which is a constant fraction of the number $r_0$ of remaining terms when it entered, before it can become lost.   
Once it becomes lost, say, when there are $r_1$ remaining terms, some
other term enters the largest $\b$ terms, and to become lost this term requires at
least a number of terms to become lost which is a constant fraction of $r_1$.
Continuing in this manner, we see the terms $r_1,r_2,\dots,$ of this
sequence must satisfy (with $k$ as in \eqref{e.mink})
$r_{i+1}\leq r_i-k\leq r_{i}(1-\frac{\alpha}{10\beta})$ by \eqref{e.mink}, since
at least one term is deleted on each turn of the box game.  In particular, with $r_0=n$, we have that this sequence can have at most $\frac{\log n}{\log\left(\frac{10\b}{10\b-\alpha}\right)}$ terms.  We can have $\beta$ different such sequences producing lost terms (one for each of the $\beta$ initially largest terms), giving a max of 
\[
\frac{\beta}{\log\left(\frac{10\beta}{10\beta-\alpha}\right)}\log n
\]
lost terms produced.
\end{proof}

We will use the following Lemma to prove \lref{boxtail}:
\begin{lemma}
\label{l.dominating}
Suppose that $e_1\geq e_2\geq \cdots \geq e_s$ and $b_1\geq b_2\geq \cdots \geq b_r$ 
and  are two states of the box game
where $s\leq r$ and $e_i\leq b_i$ for all  $1\leq i\leq s$, and $f:\N\to \N$ is an arbitrary function. 
If BREAKER has a strategy in the first to ensure that for some $t$, some tail term has value $\geq f(t)$,  then he has a strategy in the second  to ensure that for some $t$, some tail term is $\geq \ell$ by the turn $t$.
\end{lemma}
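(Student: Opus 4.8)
The plan is to prove \lref{dominating} by a turn-by-turn simulation in which BREAKER in the second (larger) game copies a fixed winning line of play from the first game, while maintaining a domination relation between the two states. First I would fix terminology: for two sorted box-game states $(c_i)_{i\le m}$ and $(c'_i)_{i\le m'}$, write $(c_i)\preceq(c'_i)$ to mean $m\le m'$ and $c_i\le c'_i$ for every $i\le m$, and record the equivalent threshold form, that $\abs{\{i:c_i\ge\theta\}}\le\abs{\{i:c'_i\ge\theta\}}$ for every real $\theta$ (this form is what makes re-sorting painless). The hypothesis of the lemma is exactly $E\preceq B$ for the two initial states, and the target is the invariant $E_t\preceq B_t$ after every turn $t$, where $E_t$ (of length $s_t$) is the state reached in the first game by the given line of play and $B_t$ (of length $r_t$) is the state BREAKER reaches in the second game.

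The copying rule I would use is: in the second game, skip the loss phase entirely; in the increase phase, add to the rank-$i$ term of $B_t$ exactly the amount the line of play adds to the rank-$i$ term of $E_t$ (and $0$ to ranks $s_t<i\le r_t$); in the deletion phase, delete from $B_t$ the terms occupying the same rank-positions that the line of play deletes from $E_t$. I then need to verify these are legal moves in the second game: the loss phase is optional; the per-term bound $4\b$ on the first $\b$ ranks and the total bound $4\b$ on the tail ranks transfer verbatim because the rank labels are identical and the extra ranks receive increment $0$; and a deleted rank-position $k\le\b+1$ is still among the $\b+1$ largest terms of $B_t$, while a second deleted position $k'\le s_t\le r_t$ is a valid position of $B_t$.

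The substance of the proof is that every phase preserves $\preceq$. A deletion in the first game can only shrink $E_t$, and removing the \emph{same} set $S$ of rank-positions (with $S\subseteq[s_t]\subseteq[r_t]$) from both sequences leaves, for each surviving rank $i\le s_t-\abs S$, the same original rank $\pi(i)$ in both sequences, so the inequality $e_{\pi(i)}\le b_{\pi(i)}$ carries over; this handles both the (first-game-only) loss phase and the deletion phase. For the increase phase, let $g(i)$ be the common increment added to rank $i$; from $e_i\le b_i$ one gets $e_i+g(i)\le b_i+g(i)$ for all $i\le s_t$, hence $\{i\le s_t:e_i+g(i)\ge\theta\}\subseteq\{i\le s_t:b_i+g(i)\ge\theta\}$ for every $\theta$, and comparing cardinalities and then re-sorting both sequences is precisely $E_t\preceq B_t$ in threshold form. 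With the invariant established, the conclusion is immediate: if the line of play makes the rank-$j$ entry of $E_t$ at least $f(t)$ with $j>\b$, then since $E_t\preceq B_t$ and $j\le s_t\le r_t$, the rank-$j$ entry of $B_t$ is also at least $f(t)$, and being a term of rank $j>\b$ it is a tail term of $B_t$; so BREAKER has forced a tail term of value $\ge f(t)$ at turn $t$ in the second game.

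I expect the only mildly delicate point to be the increase phase, where after the rank-indexed additions both sequences must be re-sorted and one must be sure this does not destroy rank-wise domination; routing that step through the threshold (upper-level-set) description of $\preceq$ avoids any case analysis. The remaining work --- legality of the copied moves and the observation that deletions only help BREAKER in the larger game --- is routine.
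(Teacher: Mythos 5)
Your proof is correct and is essentially the paper's argument: BREAKER mimics the given line of play on the dominating sequence, the rank-wise domination invariant is preserved through every phase, and the optionality of the loss phase supplies the needed slack (this is exactly the paper's parenthetical remark). Your only deviation is to skip the loss phase entirely in the larger game rather than mirroring it, which if anything is cleaner --- a term lost in the smaller game need not meet the loss threshold $\a r$ of the larger one --- and the rest is a careful, correct expansion of details the paper leaves implicit.
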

\begin{proof}  BREAKER simply mimics his strategy for the sequence $\{e_i\}$ with the sequence $\{b_i\}$.  Terms deleted or lost for the game on the first sequence are deleted or lost, respectively, for the game on the second sequence.  (Note that the freedom BREAKER has to choose not to lose terms is important here.)
\end{proof}

\noindent We are now ready to prove \lref{boxtail}.
\begin{proof}[Proof of \lref{boxtail}]
\lref{dominating} implies that it suffices to prove \lref{boxtail} for
the case where, on each turn, after the loss phase, only one term is deleted by BREAKER, and this term is the $(\b+1)$'st largest term.  Under this assumption, the terms $b_{\b+1},b_{\b+2},\dots$ are reproducing the classical box game of Chv\'atal and Erd\H{os} \cite{CE}, see 
also Hamidoune and Las Vergnas \cite{HL}, since the largest of these terms is deleted on each turn.  In particular, a simple potential function argument shows that $b_{\b+1}\leq A_\b\log t$ throughout, for $A_\b=\frac 1 {\log\left(\frac {4\b} {4\b-1}\right)}$.
\end{proof}

\subsection{Upper bound}\label{ub1}
Breaker's strategy is as follows: Breaker
chooses a vertex $w_1\notin \set{v_1,v_2}$. He will spend the next $(n-1)/\b$
moves making sure that Walker cannot visit $w_1$. In a move,
Breaker claims the edge from $w_1$ to $v_t$, if necessary, plus $\b-1$ other edges incident with
$w_1$. This takes approximately $n_1=(n-1)/\b$ moves. Breaker then chooses
another unvisited vertex $w_2$ and spends approximately $n_2=(n-1-n_1)/\b$
moves protecting $w_2$. It takes only $n_2$ rather than $n_1$ moves 
because Walker cannot use $n_1$ of the edges to $w_2$, because she has
visited the other endpoint. Continuing in
this manner Breaker protects $w_k$ in $n_k$ moves where
$$n_k=\frac{n-1-(n_1+n_2+\cdots+n_{k-1})}{\b}
=\frac{n-1}{\b}\brac{1-\frac{1}{\b}}^{k-1}.$$
It follows from this that 
$$n_1+n_2+\cdots n_k=(n-1)\brac{1-\brac{1-\frac{1}{\b}}^k}.$$
Thus we can take $k=c_1\log n$ where $c_1=1/\log(\b/(\b-1))$.
This will be our value of $c_1$ in Theorem \ref{t.rw1b}.

This completes the proof of Theorem \ref{t.rw1b}.
\section{Proof of Theorem \ref{t.prw1b}}
In this section, Walker is not constrained to a path (her walk may use an edge more than once). 
\subsection{Lower bound}\label{lowerc}
Walker builds a tree $T$ in a depth first manner. She starts at the root $v_1$ 
at depth 0. All depth/parent/child statements are with respect to this root. A vertex $v\in T$ will have a parent $w=\p(v)$ where 
the depth of $v$ is one more than the depth of $w$. If Walker is at
vertex $x$ and there is a vertex $y\in U_t$ such that Breaker
has not claimed the edge $(x,y)$ then Walker moves to $y$. We let
$x=\p(y)$. 
Otherwise, if no such move is possible, Walker moves to $\p(x)$
and repeats the search for $y\in U$ on its next move.
The game is over when Walker finds herself at $v_1$ and all edges
$v_1$ to $U$ have been taken by Breaker.

Suppose that the game ends with $|U|=k$. Then Walker has made $2(n-k-1)$
moves. Each edge of $T$ has been traversed twice, once in a forward direction 
and once in a backwards direction. Breaker has captured at least $k(n-k)$ edges
between $T$ and $U$. We therefore have
$$k(n-k)\leq 2\b(n-k-1).$$
It follows from this that $k<2\b$. This shows that Walker visits at least
$n-2\b+1$ 
vertices.
\subsection{Upper bound}
The argument here has some similarities to that in Section \ref{ub1}.
Breaker's strategy is as follows: Assuming Walker goes first and claims an edge $\set{v_1,v_2}$, Breaker
chooses a vertex $w_1\notin \set{v_1,v_2}$. He will spend the next $(n-1)/\b$
moves making sure that Walker cannot visit $w_1$. In a move,
he claims the
edge from $w_1$ to $v_t$, if necessary, plus $\b-1$ other edges incident with
$w_1$. This takes approximately $(n-1)/\b$ moves. Then he chooses $w_2$, not
visited and protects it from being visited in the same way. He does this
for $w_1,w_2,\ldots,w_{\b-1}$. Altogether, this takes up at most $\rdup{(n-1)(\b-1)/\b}$
moves, leaving at least $\Floor{(n-1)/\b}+1$ vertices unvisited. Breaker then chooses $\b$ unvisited
vertices $y_1,y_2,\ldots,y_\b$ (possible since $n-1\geq \b^2$) and a move consists of capturing the edges 
$(v_t,y_i),\,i=1,2,\ldots,\b$. This protects $y_1,y_2,\ldots,y_\b$ and so
Walker visits at most 
$n-2\b+1$ vertices.
This completes the proof of Theorem \ref{t.prw1b}.
\section{Proof of Theorem \ref{t.prwQ}}
\subsection{Lower bound}
We use a similar argument to that in Section \ref{lowerc}. Walker builds a Depth First Search tree $T$. Again, the edges between $T$ and $U$ will all be Breaker's edges. Suppose now that $T$ has $k$ vertices. Then 
$$2(k-1)\geq e(T,U)\geq k(n-\log_2k).$$
The lower bound follows from Harper's theorem \cite{H66}. It follows that 
$$\log_2k\geq n-2+\frac{2}{k}$$
and so at least
$2^{n-2}$ vertices are visited by Walker.
\subsection{Upper bound}
Suppose that Walker goes first and assume w.l.o.g. that she starts at
$(0,0,\ldots,0)$ and then moves to $(0,1,\ldots,0)$. Breaker will not
allow her to visit any vertex whose first component is 1. When Walker
moves to $(0,x_2,x_3,\ldots,x_n)$, Breaker acquires the edge
$((0,x_2,x_3,\ldots,x_n),(1,x_2,x_3,\ldots,x_n))$. Breaker can acquire
the edge $((0,0,\ldots,0),(1,0,\ldots,0)$ on his last move, if not
before. It follows that at most $2^{n-1}$ vertices are visited by Walker.
This completes the proof of Theorem \ref{t.prwQ}.

\section{Proof of Theorem \ref{t.rwL}}
Here we will assume that Walker does a random walk on a graph $G$. When at a vertex $v$ she chooses a random neighbor $w$ for which the edge $(v,w)$ is not a Breaker edge.

Consider the first $t_0=4\a^{-1}n\log n$ moves. Let $G_t$ be the subgraph of $G$
induced by the edges not acquired by Breaker after $t$ moves. Let $L_t$ be the set of
vertices incident with more than $\a n/3$ Breaker edges after the
completion of $t$ moves by Breaker. Clearly $|L_t|\leq C_0\log n$,
where $C_0=24\a^{-2}$.

Let $v_t$ denote the current vertex being visited by Walker and let
$U_t$ denote the set of vertices that are not in $L_t$ and are
currently unvisited. Then
\begin{enumerate}[(a)]
\item If $v_t\notin L_t$ then the probability that Walker visits
 $U_t$ within two steps is at least $\b|U_t|/n$ where $\b=\a^3/36$,
 assuming that $|U_t|\geq \log n$. To see this let $Z=|N(v_{t+1})\cap
 U_t|$. Then $\E(Z)\geq \a|U_t|/2$. This is because  $v_t$ and any
 $w\in U_t$ have at least $\a n-2\a n/3=\a n/3$ common neighbors in $G_t$.  Thus, if $\bar{Z}=|U_t|-Z$ then $\E(\bar{Z})\leq (1-\a/3)|U_t|$. It follows from the Markov inequality that $\Pr(\bar{Z}\geq (1-\a^2/9)|U_t|)\leq \frac{1}{1+\a/3}$ and so
$\Pr(Z\geq \a^2|U_t|/9)\geq \frac{\a}{3+\a}$. Finally observe that $\Pr(v_{t+1}\in U_t\mid Z)\geq (Z-1)/n$, where we have subtracted 1 to account for Breaker's next move. 
\item We divide our moves up into periods $A_1,B_1,A_2,B_2,\ldots,$
  where $A_j$ is a sequence of moves taking place entirely outside
  $L_t$ and $B_j$ is a sequence of moves entirely within $L_t$. During
  a time period $A_j$, the probability this period ends is at most
  $\frac{24\a^{-2}\log n}{\a n/3}$. So the number of time periods is dominated by the binomial $Bin(4\a^{-1}n\log n,72\a^{-2}\log n/\a n)$ and so with probability $1-o(n^{-3})$ the number of periods is less than $300\a^{-4}\log^2n$.  
\item We argue next that 
\beq{long}
\text{with probability $(1-o(n^{-3})$ each $B_j$ takes up at most $O(\log^{6}n)$ moves.}
\eeq
Suppose that $B_j$ begins with a move from $v\notin L_t$ to $w\in
L_t$. Let $L^*=L_t\cup\set{v}$ and let $H^*$ denote the subgraph
induced by the edges contained in $L^*$ that have not been acquired by
Breaker. Walker's moves in period $B_j$ constitute a
random walk on (part) of the graph $H^*$. This is not quite a simple random random walk,
since $H^*$ changes due to the fact that Breaker can delete some the edges available to
Walker. Nevertheless, Walker will always be in a component of $H^*$
containing $v_t$. Now consider running this walk for $C_1\log^5n$ steps,
where $C_1$ is some sufficiently large constant. Observe that Breaker
can claim at most $C_0^2\log^2n$ edges inside this component of $H^*$. Hence there will be an interval of length
$C_2\log^3n,\,C_2=C_1/C_0^2$ where Breaker does not claim any edge
inside $H^*$. This means that in this interval we perform a simple
random walk on a connected graph with at most $(C_0+1)\log n$
vertices. If we start this interval at a certain vertex $x$, then we
are done if the random walk visits $v$. 
It follows from Brightwell and Winkler \cite{BW90} that the expected time
for the walk to visit $v$ can be bounded by $C_0^3\log^3n$. So, if
$C_2>2C_0^3$ then $v$ will be visited with probability at least 1/2. 

Suppose that time has increased from the time $t$ when $B_j$ began to $t'$ when $v$ is first re-visited. If $v\notin L_{t'}$ then $B_j$ is complete. If however $v\in L_{t'}$ then we know that $v$ is incident with at most $\a n/3+C_1\log^5n$ Breaker edges. So the probability that Walker leaves $L_{t'}$ in her next step is at least
\beq{bb1}
\frac{d_G(v)-(\a n/3+C_1\log^5n)}{d_G(v)}\geq \frac12.
\eeq 
So the probability that $B_j$ ends after $C_1\log^5n$ steps is at least 1/4. Suppose on the other hand that $B_j$ does not end and that we return to $v$ for $k$th time where $k\leq 20\log n$. The effect of this is to replace $C_1\log^5n$ in \ref{bb1} by $kC_1\log^5n$. This does not however affect the final inequality. So if $C_1$ is sufficiently large, the probability that $B_j$ does not end after $20C_1\log^6n$ steps is at most $(3/4)^{20\log n}=o(n^{-4})$.
Estimate \eqref{long}
follows immediately.
\item Combining the discussion in (b), (c) we see that w.h.p. 
$\card{\bigcup_jB_j}=O(\log^{8}n)$, which is negligible compared with $t_0$; i.e., Walker spends almost all of her time outside $L_{t_0}$. Let
$X_i,1\leq i\leq k=n-O(\log n)$ be the time needed to add the
$i$th vertex to the list of vertices visited by Walker. (Here we
exclude any time spent in $\bigcup_jB_j$). It follows from (a) that $X_i$
is dominated by a geometric random variable with probability of
success $\frac{(n-i)\b}{n}$. This is true regardless of
$X_1,X_2,\ldots,X_{i-1}$. So $\E(X_1+\cdots+X_k)\leq \frac{1}{\b}n\log
n$ and it is not difficult to show that $X_1+\cdots+X_k\leq
\frac{2}{\b}n\log n$ w.h.p. (We can use the Chebyshev inequality or
estimate the moment generating function of the sum and use Bernstein's method).
This completes the proof of Theorem \ref{t.rwL}.
\section{Proof of Theorem \ref{t.rwU}}
We assume that Walker chooses a vertex $a_0$ to start at and then Breaker chooses an edge to acquire.

Breaker's strategy will be to choose an arbitrary unvisited vertex
$v_1$ and protect it by always on his turn taking the edge $(v_1,w)$
where $w$ is the current vertex being visited by Walker, if
$(v_1,w)\in E(G)$. If Breaker has already acquired
$(v_1,w)$ or $(v_1,w)\notin E(G)$ then he will choose an unacquired edge incident with $v_1$. This continues until Breaker has acquired all of the edges incident with $v_1$. He then chooses $v_2$ and protects it. This continues until there are no unvisited vertices to protect.

After Breaker has protected $v_1,v_2,\ldots,v_{k-1}$ and while he is protecting $v_k$, Walker finds herself doing a random walk on a dense graph with $n-k$ vertices. Let the moves spent protecting $v_k$ be denoted by {\em round} $k$.

Fix $k=O(\log n)$ and let $\z_k$ be the number of unvisited, unprotected 
vertices when Breaker begins protecting $v_k$. It will take at most
$n-k-1$ more moves to protect $v_k$ and if $w$ is an unvisited, unprotected
vertex at the start of the round, then it remains unvisited with
probability at least $\brac{1-\frac{1}{\a n-k}}^{n-k-1}=e^{-1/\a}+O(1/(n-k))$.
It follows that $\E(\z_{k+1})\sim \z_k/e^{1/\a}$. Furthermore the random variable
$\z_{k+1}$ is tightly concentrated around its mean, if $\z_k\gg\log n$. We can use the Azuma-Hoeffding martingale tail inequality. (Changing one choice by Walker will change this random variable by at most one.)

It follows that w.h.p. $\z_k\sim ne^{-k/\a}$ for $k\leq (1-\e)\a\log
n$ where $0<\e<1$ is a positive constant. Thus Breaker will w.h.p. be
able to protect $(1-\e)\a\log n$ vertices and we can choose any
$c<\a$ in Theorem \ref{t.rwU}.
\end{enumerate}
This completes the proof of Theorem \ref{t.rwU}. 
\section{Further Questions}
Some natural questions spring to mind:
\begin{itemize}
\item How large a cycle can Walker make under the various conditions?
\item Suppose the goal is to visit as many edges as possible: what can be achieved under various game conditions?
\item Which subgraphs can Walker make? How large a clique can she
  make? Observe that PathWalker cannot even make a triangle.
\item What if we allow Walker to have $b$ moves to Breaker's one move?
\item What happens if Breaker is also a walker?
\end{itemize}
{\bf Acknowledgement:} We thank Dennis Clemens for pointing out an
error in an earlier version of Theorem \ref{t.rw11}. 

\end{document}